\newtheorem{theorem}{Theorem}[section]
\newtheorem{lemma}[theorem]{Lemma}
\theoremstyle{definition}
\newtheorem{definition}[theorem]{Definition}
\newtheorem{example}[theorem]{Example}
\newcommand{\lean}[1]{}
\newcommand{\leanok}{}
\NewDocumentCommand{\uses}{m}
 {\clist_map_inline:nn{#1}{\vphantom{\ref{##1}}}%
  \ignorespaces}
\title{\textbf{A Linear Representation for Functions on Finite Sets}}
\author{
  Roman Bacik \\
  \small Vancouver, Canada \\
}
\date{\today}
\begin{document}

\maketitle

\begin{abstract}
We demonstrate that any function $f$ from a finite set $Y$ to itself can be represented linearly. 
Specifically, we prove the existence of an injective map $j$ from $Y$ into a modular ring 
$\mathbb{Z}/m\mathbb{Z}$ and a constant $a \in \mathbb{Z}/m\mathbb{Z}$ such that 
$j(f(y)) = a \cdot j(y)$ in $\mathbb{Z}/m\mathbb{Z}$ holds for all $y \in Y$. 
This result is established by analyzing the algebraic properties of the adjugate of 
the characteristic matrix associated with the function's digraph. The proof is constructive, 
providing a method for finding the embedding $j$, the modulus $m$, and the linear multiplier $a$.
\end{abstract}

\noindent\textbf{Keywords:} Functional graph, Linear representation, Adjugate matrix, Characteristic polynomial, Finite functions.

\section{Introduction}

Functions on finite sets are fundamental objects in discrete mathematics, combinatorics, 
and theoretical computer science. The structure of such a function $f: Y \to Y$ is completely 
described by its functional graph (or functional digraph), a directed graph where a unique edge 
emanates from each vertex $y$ to its image $f(y)$. While these functions can exhibit complex and 
seemingly chaotic behavior, a central goal in mathematics is to find alternative representations 
that reveal underlying algebraic structure.

This article establishes that any such function becomes linear after a suitable embedding. 
We show that for any function $f$ on a finite set, one can find an injective map $j$ 
from the set into a ring of integers modulo $m$ such that the action of $f$ corresponds to 
multiplication by a constant $a$. That is for all $y \in Y$:
$j(f(y)) = a \cdot j(y)$ in $\mathbb{Z}/m\mathbb{Z}$ and hence the following diagram commutes:

\[
\begin{tikzcd}
Y \arrow[r,"f"] \arrow[d,"j"'] & Y \arrow[d,"j"] \\
\mathbb{Z}/m\mathbb{Z} \arrow[r,"a."] & \mathbb{Z}/m\mathbb{Z}
\end{tikzcd}
\]

This result bridges the gap between arbitrary discrete maps and structured linear systems. 
Without loss of generality, we can identify elements of the finite set $Y$ of size $n$ with the 
set $\{0, 1, \ldots, n-1\}$, which are elements of $\mathbb{Z}/n\mathbb{Z}$.
The full formal proof of this result is in \cite{rbacik2025}. 

\section{Main Results}

\begin{definition}[Adjacency Matrix of a Function]
\label{def:func_matrix}
\lean{func_matrix}
\leanok
Let $f: \mathbb{Z}/n\mathbb{Z} \to \mathbb{Z}/n\mathbb{Z}$ be any function. The function $f$ is represented by an 
$n \times n$ adjacency matrix $A=A_f$, where the entry 
$a_{ij} = \delta_{f(i),j}$ 
and $\delta_{i,j}$ is the Kronecker delta.
With this convention, each row of $A$ contains exactly one non-zero entry.
\end{definition}

\begin{lemma}
\label{lem:func_matrix_eq}
\lean{func_matrix_eq}
\uses{def:func_matrix}
\leanok
Let $f: \mathbb{Z}/n\mathbb{Z} \to \mathbb{Z}/n\mathbb{Z}$ be any function and $A=A_f$ be the adjacency matrix of the function $f$. 
Then for all $i\in \{0,1,\ldots,n-1\}$ and $y\in \mathbb{Z}^n$
$$(A y)_i = y_{f(i)}.$$
\end{lemma}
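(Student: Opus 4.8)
The plan is to unwind the definition of matrix–vector multiplication and then collapse the resulting sum using the fact that a Kronecker delta is supported on a single index. Fix $i \in \{0,1,\ldots,n-1\}$ and $y \in \mathbb{Z}^n$. By definition of the product $Ay$ and Definition~\ref{def:func_matrix},
\[
(Ay)_i \;=\; \sum_{j=0}^{n-1} a_{ij}\, y_j \;=\; \sum_{j=0}^{n-1} \delta_{f(i),j}\, y_j .
\]
Each summand $\delta_{f(i),j}\, y_j$ vanishes whenever $j \neq f(i)$, and equals $y_{f(i)}$ when $j = f(i)$. Since $f(i)$ ranges over $\mathbb{Z}/n\mathbb{Z} \cong \{0,1,\ldots,n-1\}$, exactly one index $j$ in the summation range satisfies $j = f(i)$, so the sum telescopes to the single term $y_{f(i)}$, which is the claim.

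The only point requiring a little care is the indexing bookkeeping: the row/column indices live in $\mathbb{Z}/n\mathbb{Z}$ while the vector $y$ is a priori an element of $\mathbb{Z}^n$, so one must invoke the identification of $\{0,1,\ldots,n-1\}$ with $\mathbb{Z}/n\mathbb{Z}$ fixed in the Introduction to make sense of $y_{f(i)}$. With that identification in place, the sum-over-a-delta step is the standard fact that $\sum_{j} \delta_{kj} c_j = c_k$ when $j$ ranges over a finite index set containing $k$ exactly once; in a formal development this is precisely the simp lemma for Kronecker delta applied to a finite sum (equivalently, $A$ is the $0$–$1$ matrix whose $i$-th row is the standard basis covector $e_{f(i)}^{\top}$, and $e_{f(i)}^{\top} y = y_{f(i)}$).

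I do not expect any genuine obstacle here; the lemma is essentially a definitional unfolding. The mild friction in a formalized proof is ensuring the coercions between $\mathbb{Z}/n\mathbb{Z}$-indices and the vector's index type are handled so that the rewrite by the delta-sum identity typechecks, and that the finite summation is over the correct index set. Once those are aligned, the computation is immediate.
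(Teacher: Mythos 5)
Your proposal is correct and matches the paper's proof exactly: both expand $(Ay)_i$ via the definition of matrix--vector multiplication and collapse the Kronecker delta sum to the single term $y_{f(i)}$. The extra remarks on index identification are reasonable bookkeeping but do not change the argument.
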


\begin{proof}
\leanok
The proof follows from the Definition \ref{def:func_matrix}.
$$(A y)_i = \sum_{j=0}^{n-1} a_{ij} y_j = \sum_{j=0}^{n-1} \delta_{f(i),j} y_j = y_{f(i)}$$
\end{proof}

\begin{lemma}
\label{lem:adj_eq}
\lean{adj_eq}
\uses{def:func_matrix,lem:func_matrix_eq}
\leanok
Let $f: \mathbb{Z}/n\mathbb{Z} \to \mathbb{Z}/n\mathbb{Z}$ be any function and $A=A_f$ be the adjacency matrix of the function $f$. 
Let $v\in \mathbb{Z}^n$, $y = \operatorname{adj}(x I - A) v$ and $m = \det(x I - A)$. Then for all $i\in \{0,1,\ldots,n-1\}$
$$y_{f(i)} = x y_i - m v_i.$$
\end{lemma}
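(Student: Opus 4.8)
The plan is to reduce the statement to the defining identity of the adjugate matrix together with Lemma~\ref{lem:func_matrix_eq}. Recall that over any commutative ring $R$, and for any square matrix $M$ with entries in $R$, one has $M \cdot \operatorname{adj}(M) = \det(M)\, I$. I would apply this with $R = \mathbb{Z}[x]$ and $M = xI - A$, which gives $(xI - A)\,\operatorname{adj}(xI - A) = m\, I$, where $m = \det(xI - A) \in \mathbb{Z}[x]$.

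Next I would multiply this matrix identity on the right by the vector $v \in \mathbb{Z}^n \subseteq \mathbb{Z}[x]^n$ to obtain $(xI - A)\,y = m v$, using that $y = \operatorname{adj}(xI - A)\,v$ by definition. Rearranging yields $A y = x y - m v$, an identity between vectors with entries in $\mathbb{Z}[x]$.

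Finally I would compare the $i$-th coordinate of both sides. The right-hand side contributes $x y_i - m v_i$ directly. For the left-hand side I would invoke Lemma~\ref{lem:func_matrix_eq}, which gives $(A y)_i = y_{f(i)}$; although that lemma is phrased for $y \in \mathbb{Z}^n$, its proof is the purely formal computation $(Ay)_i = \sum_j \delta_{f(i),j}\, y_j = y_{f(i)}$, which holds verbatim when $y$ has entries in $\mathbb{Z}[x]$. Combining the two coordinate computations gives $y_{f(i)} = x y_i - m v_i$, as claimed.

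The argument is essentially mechanical; the only points requiring a little care are choosing the correctly oriented one-sided adjugate identity (the factor $\operatorname{adj}(M)$ must sit on the side matching the definition of $y$) and noting that Lemma~\ref{lem:func_matrix_eq} extends without change from integer vectors to vectors over the polynomial ring $\mathbb{Z}[x]$. I do not anticipate any genuine obstacle beyond this bookkeeping.
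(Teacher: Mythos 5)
Your proposal is correct and follows exactly the paper's own argument: apply the adjugate identity $(xI-A)\operatorname{adj}(xI-A)=\det(xI-A)I$ to $v$, rearrange to $Ay = xy - mv$, and read off coordinates via Lemma~\ref{lem:func_matrix_eq}. Your remark that Lemma~\ref{lem:func_matrix_eq} must be applied to a vector with entries in $\mathbb{Z}[x]$ rather than $\mathbb{Z}$ is a point the paper glosses over, and you handle it correctly.
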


\begin{proof}
\leanok
For adjugate matrix we have identity 
$(x I - A) \operatorname{adj} (x I - A) = \det(x I - A) I$.
Therefore,
$$m v = \det(x I - A) v = (x I - A) \operatorname{adj} (x I - A) v 
= (x I - A) y = x y - A y.$$
The final equality follows from the Lemma \ref{lem:func_matrix_eq}.
\end{proof}

\begin{lemma}
\label{lem:det_degree_le_sum_degrees}
\lean{det_degree_le_sum_degrees}
\leanok
Let $M$ be an $n \times n$ matrix with polynomial entries $m_{ij} \in \mathbb{Z}[x]$.
Then 
$$\deg(\det(M)) \leq \sum_{i=0}^{n-1} \sum_{j=0}^{n-1} \deg(m_{ij}).$$
\end{lemma}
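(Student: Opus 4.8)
The plan is to reduce the bound to two elementary facts about degrees of polynomials in $\mathbb{Z}[x]$: that the degree of a finite sum is at most the maximum of the degrees of the summands, and that the degree of a finite product is at most the sum of the degrees of the factors (an equality over the integral domain $\mathbb{Z}$ when no factor vanishes). I will work with the convention $\deg(0) = 0$, so that $\deg$ takes values in $\mathbb{N}$ and both inequalities hold unconditionally; this convention is also what makes the right-hand side $\sum_{i,j}\deg(m_{ij})$ a well-defined natural number even when some entries are zero.

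First I would expand the determinant by the Leibniz formula,
$$\det(M) = \sum_{\sigma \in S_n} \operatorname{sgn}(\sigma)\prod_{i=0}^{n-1} m_{i,\sigma(i)},$$
and apply sub-additivity of degree to this sum over the $n!$ permutations, noting that multiplying a term by the unit $\operatorname{sgn}(\sigma)=\pm 1$ leaves its degree unchanged. This gives $\deg(\det(M)) \le \max_{\sigma}\deg\bigl(\prod_i m_{i,\sigma(i)}\bigr)$. Next I would apply sub-multiplicativity of degree to each product, obtaining $\deg\bigl(\prod_i m_{i,\sigma(i)}\bigr) \le \sum_{i=0}^{n-1}\deg(m_{i,\sigma(i)})$ for every $\sigma$.

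The last step is a purely combinatorial comparison of index sets: for a fixed permutation $\sigma$, the pairs $(0,\sigma(0)),\dots,(n-1,\sigma(n-1))$ are distinct entries of $M$, so $\sum_i \deg(m_{i,\sigma(i)})$ is a sub-sum of $\sum_i\sum_j \deg(m_{ij})$; since every omitted term $\deg(m_{ij})$ is a nonnegative integer, the sub-sum is $\le$ the full double sum. Chaining the three inequalities proves the claim. I do not anticipate a real obstacle — the proof is essentially a packaging of standard lemmas — but the points that require attention are the degenerate cases (the $n=0$ empty determinant, or a row all of whose entries are zero), which is precisely why one uses the $\deg(0)=0$ convention and the inequality (rather than equality) forms of the degree laws. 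If the Leibniz expansion turns out to be awkward to manipulate directly, an equally routine alternative is induction on $n$ via cofactor expansion along the first row, applying the inductive hypothesis to the $(n-1)\times(n-1)$ minors and then using the same nonnegativity argument to absorb the extra terms.
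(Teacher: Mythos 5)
Your proposal is correct and follows essentially the same route as the paper's proof: Leibniz expansion, sub-additivity of degree over the sum of permutation terms, sub-multiplicativity over each product, and the observation that the resulting sum over a permutation's entries is a sub-sum of the full double sum of nonnegative degrees. Your extra attention to the $\deg(0)=0$ convention and the degenerate cases is a sensible refinement but does not change the argument.
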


\begin{proof}
\leanok
The determinant is a sum over permutations $\sigma$ of products $\prod_{i=0}^{n-1} m_{\sigma(i),i}$.
Each product has degree at most $\sum_{i=0}^{n-1} \deg(m_{\sigma(i),i})$.
Since a single element of a sum is at most the whole sum (when all terms are non-negative), 
this is bounded by $\sum_{i=0}^{n-1} \sum_{j=0}^{n-1} \deg(m_{ij})$.
The degree of a sum is at most the maximum degree of its summands.
\end{proof}

\begin{lemma}
\label{lem:charmatrix_helpers}
\lean{charmatrix_det_eq_charpoly, charmatrix_det_monic, charmatrix_det_natDegree}
\leanok
Let $A$ be an $n \times n$ matrix with integer entries. The characteristic matrix $\chi_A(x) = x I - A$ has determinant equal to the characteristic polynomial:
$$\det(\chi_A(x)) = \det(x I - A)$$
For all $n \in N$, this polynomial is monic of degree $n$.
\end{lemma}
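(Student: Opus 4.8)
The three assertions are standard structural facts about the characteristic polynomial, and my plan is to obtain all of them from a single expansion of the determinant along the Leibniz (sum over permutations) formula.

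First, the equality $\det(\chi_A(x)) = \det(xI-A)$ is essentially a definitional matter: the characteristic matrix $\chi_A(x)$ is by construction the matrix whose $(i,i)$ entry is the polynomial $x - a_{ii}$ and whose $(i,j)$ entry for $i \neq j$ is the constant polynomial $-a_{ij}$, i.e.\ it \emph{is} the matrix $xI - A$ with entries viewed in $\mathbb{Z}[x]$. So this part unfolds definitions and appeals to the fact that the determinant commutes with the ring map $\mathbb{Z} \to \mathbb{Z}[x]$; in the formalization this is the lemma identifying \texttt{charpoly} with the determinant of the characteristic matrix.

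For monicity and the degree, I would write
$$\det(xI - A) = \sum_{\sigma} \operatorname{sgn}(\sigma)\prod_{i=0}^{n-1} (\chi_A(x))_{i,\sigma(i)},$$
the sum running over all permutations $\sigma$ of $\{0,1,\ldots,n-1\}$. The identity permutation contributes $\prod_{i=0}^{n-1}(x - a_{ii})$, which is monic of degree exactly $n$. For any $\sigma \neq \mathrm{id}$, the number of fixed points of $\sigma$ is at most $n-2$ (a permutation fixing $n-1$ points fixes the last one as well), and the factor $(\chi_A(x))_{i,\sigma(i)}$ has degree $1$ precisely when $i$ is a fixed point of $\sigma$ and degree $0$ otherwise; hence that term has degree at most $n-2$. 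By Lemma~\ref{lem:det_degree_le_sum_degrees} (or just the bound on the degree of a sum), $\det(xI-A)$ equals the monic degree-$n$ polynomial $\prod_i (x - a_{ii})$ plus a polynomial of degree strictly less than $n$. Reading off the coefficient of $x^n$ then gives both claims at once: the degree is $n$ and the leading coefficient is $1$, so the polynomial is monic of degree $n$. The case $n = 0$ is handled separately (and trivially): the empty determinant is the constant polynomial $1$, which is monic of degree $0$.

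The main obstacle is the bookkeeping in the non-identity case — formalizing that a non-identity permutation fixes at most $n-2$ points, turning "at most $n-2$ fixed points" into "the corresponding product has degree at most $n-2$", and then combining these per-term degree bounds to certify that no term other than the identity term contributes to degree $n$. Once that estimate is in place, monicity and the degree formula both fall out immediately, uniformly in $n$.
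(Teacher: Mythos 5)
Your proposal is correct. The paper's own ``proof'' of this lemma is a one-line appeal to ``standard properties of the characteristic polynomial,'' with no argument supplied, so you are not so much taking a different route as actually filling in the route the paper leaves implicit. Your Leibniz-formula argument is exactly the standard one: the identity permutation contributes the monic degree-$n$ product $\prod_i (x - a_{ii})$, every non-identity permutation has at most $n-2$ fixed points and hence contributes a term of degree at most $n-2$, and the degenerate case $n=0$ gives the empty determinant $1$. One small remark: you do not actually need to invoke Lemma~\ref{lem:det_degree_le_sum_degrees} here (and the paper does not list it as a dependency of this lemma); the per-term degree count you already carried out, combined with the fact that the degree of a sum is bounded by the maximum degree of its summands, suffices. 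Everything else checks out, including the observation that a permutation fixing $n-1$ points fixes all $n$, which is the key combinatorial step.
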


\begin{proof}
\leanok
This follows from the standard properties of the characteristic polynomial.
\end{proof}

\begin{lemma}
\label{lem:adj_offdiag_sum_degrees_bound}
\lean{adj_offdiag_sum_degrees_bound}
\leanok
\uses{lem:det_degree_le_sum_degrees}
Let $A$ be an $n \times n$ matrix with integer entries and $\chi_A(x) = x I - A$ be its characteristic matrix.
For $i \neq j$, the $(i,j)$ entry of $\operatorname{adj}(\chi_A(x))$ has degree at most $n-2$.
\end{lemma}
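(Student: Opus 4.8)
The plan is to expand the adjugate entry as a signed cofactor and then bound the degree of the resulting $(n-1)\times(n-1)$ minor with Lemma~\ref{lem:det_degree_le_sum_degrees}. Since $\operatorname{adj}(M)$ is the transpose of the cofactor matrix of $M$, we have for $i \neq j$
$$\operatorname{adj}(\chi_A(x))_{ij} \;=\; (-1)^{i+j}\,\det\!\big(\chi_A(x)_{\widehat{j},\widehat{i}}\big),$$
where $\chi_A(x)_{\widehat{j},\widehat{i}}$ is the matrix obtained from $\chi_A(x) = xI - A$ by deleting row $j$ and column $i$. As multiplication by the unit $(-1)^{i+j}$ does not affect degree, it suffices to prove $\deg\det\!\big(\chi_A(x)_{\widehat{j},\widehat{i}}\big) \le n-2$.

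Next I would record the shape of the entries of $\chi_A(x)$: the diagonal entry in position $(k,k)$ equals $x - a_{kk}$ and has degree $1$, while every off-diagonal entry equals the constant $-a_{k\ell}$ and thus has degree $0$ (the zero polynomial included). The key combinatorial observation is then to count the positions of the deleted submatrix that can carry degree $1$: such a position must come from an original diagonal slot $(k,k)$ whose index $k$ survives both deletions, i.e.\ $k \neq j$ and $k \neq i$. Because $i \neq j$, the surviving diagonal indices form the set $\{0,\dots,n-1\}\setminus\{i,j\}$ of size exactly $n-2$; every other entry of the $(n-1)\times(n-1)$ submatrix is a constant of degree $0$.

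Feeding this into Lemma~\ref{lem:det_degree_le_sum_degrees} applied to the submatrix $M' = \chi_A(x)_{\widehat{j},\widehat{i}}$ gives
$$\deg\det(M') \;\le\; \sum_{k,\ell}\deg(m'_{k\ell}) \;\le\; (n-2)\cdot 1 \;+\; \big((n-1)^2-(n-2)\big)\cdot 0 \;=\; n-2,$$
and the cofactor identity above then yields the stated bound. The cases $n \le 1$ are vacuous, as no indices $i \neq j$ exist there.

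I expect the main obstacle to be purely organizational rather than mathematical: carefully tracking, after deleting row $j$ and column $i$, exactly which entries of the remaining matrix originate from diagonal positions of $\chi_A(x)$ and hence could have degree $1$. The arithmetic identity $|\{0,\dots,n-1\}\setminus\{i,j\}| = n-2$ (which uses $i \neq j$ crucially) is the heart of the matter; everything else is the routine degree estimate packaged in Lemma~\ref{lem:det_degree_le_sum_degrees}, together with the trivial fact that a unit scalar does not change a polynomial's degree.
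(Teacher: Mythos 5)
Your proposal is correct and follows essentially the same route as the paper's proof: identify the adjugate entry with the minor obtained by deleting row $j$ and column $i$, observe that exactly $n-2$ surviving entries (the diagonal positions $k \notin \{i,j\}$) have degree $1$ while the rest have degree $0$, and conclude via Lemma~\ref{lem:det_degree_le_sum_degrees}. The only cosmetic difference is that you carry the cofactor sign $(-1)^{i+j}$ explicitly, which the paper omits since a unit factor does not affect the degree.
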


\begin{proof}
\leanok
The adjugate entry $\operatorname{adj}(\chi_A(x))_{ij}$ equals the determinant of the characteristic matrix with row $j$ and column $i$ removed from $\chi_A(x)$.

This submatrix has diagonal entries from $\chi_A(x)$ except at diagonal positions $i$ and $j$ (which are deleted). 
Since $\chi_A(x)$ has diagonal entries of degree $1$ (from $x I$) and off-diagonal entries of degree $0$ (from $-A$), 
the submatrix has exactly $n-2$ diagonal entries of degree $1$ and all other entries of degree $0$.

By Lemma \ref{lem:det_degree_le_sum_degrees}, the determinant has degree at most $n-2$.
\end{proof}

\begin{lemma}
\label{lem:adj_poly}
\lean{adj_poly}
\leanok
\uses{lem:adj_offdiag_sum_degrees_bound,lem:charmatrix_helpers}
Let $M = M(x) = \operatorname{adj}(x I - A)$ be the adjugate of the characteristic matrix $x I - A$.
Then the matrix entries $m_{ij} = p_{ij}(x)$ are polynomials in $x$ for all $i,j\in \{0,1,\ldots,n-1\}$ such that
\begin{itemize}
\item $p_{ii}(x)$ is monic of degree $n-1$ for all $i\in \{0,1,\ldots,n-1\}$ and
\item $p_{ij}(x)$ has degree at most $n-2$ for all $i\neq j\in \{0,1,\ldots,n-1\}$.
\end{itemize}
\end{lemma}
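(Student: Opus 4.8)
The plan is to reduce each entry of $M(x) = \operatorname{adj}(xI - A)$ to the determinant of a characteristic matrix of a smaller integer matrix, and then invoke the facts already established. Recall that by the definition of the adjugate, $m_{ij} = \operatorname{adj}(xI-A)_{ij}$ equals the $(j,i)$ cofactor of $xI - A$, that is, $(-1)^{i+j}$ times the determinant of the matrix obtained from $xI - A$ by deleting row $j$ and column $i$. In particular every $m_{ij}$ lies in $\mathbb{Z}[x]$, since determinants of matrices with entries in $\mathbb{Z}[x]$ are again in $\mathbb{Z}[x]$; this gives the first assertion that the entries are polynomials.

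For the off-diagonal bound there is nothing new to do: the claim $\deg p_{ij} \le n-2$ for $i \neq j$ is exactly the content of Lemma \ref{lem:adj_offdiag_sum_degrees_bound}.

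For the diagonal entries, fix an index $i$. The cofactor sign is $(-1)^{i+i} = 1$, so $p_{ii}(x)$ is precisely the determinant of the $(n-1)\times(n-1)$ matrix obtained by deleting row $i$ and column $i$ from $xI - A$. Deleting the $i$-th row and column of $xI$ leaves $x I_{n-1}$, and deleting them from $A$ leaves the principal submatrix $A_{\hat i}$ (the matrix $A$ restricted to the indices different from $i$); hence the submatrix in question is $x I_{n-1} - A_{\hat i}$, the characteristic matrix of the integer matrix $A_{\hat i}$. Applying Lemma \ref{lem:charmatrix_helpers} to $A_{\hat i}$, an $(n-1)\times(n-1)$ integer matrix, shows that its determinant — which is $p_{ii}(x)$ — is monic of degree $n-1$, as desired.

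The main obstacle is the bookkeeping needed to make the identification ``delete row $i$ and column $i$ from the characteristic matrix $=$ characteristic matrix of the deleted principal submatrix'' precise: in a formal development this requires reindexing between $n$-element and $(n-1)$-element index types (via $\operatorname{succAbove}$-type equivalences) and checking that the adjugate's cofactor-expansion definition lines up with the submatrix under this reindexing. Once those reindexings are in place, the lemma is immediate from Lemmas \ref{lem:adj_offdiag_sum_degrees_bound} and \ref{lem:charmatrix_helpers} with no further computation. The case $n = 1$ is degenerate — there are no off-diagonal entries, and $p_{00} = 1$ is the monic polynomial of degree $n-1 = 0$ under the convention that the adjugate of a $1\times 1$ matrix is the identity — and is covered by the same argument.
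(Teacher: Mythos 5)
Your proof is correct and follows essentially the same route as the paper's: the off-diagonal bound is delegated to Lemma \ref{lem:adj_offdiag_sum_degrees_bound}, and each diagonal entry is identified (with trivial cofactor sign) as the characteristic polynomial of the $(n-1)\times(n-1)$ principal submatrix, hence monic of degree $n-1$ by Lemma \ref{lem:charmatrix_helpers}. Your version simply spells out the sign and reindexing details that the paper leaves implicit.
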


\begin{proof}
\leanok
The diagonal entries of $\operatorname{adj}(x I - A)$ are characteristic polynomials of $(n-1) \times (n-1)$ submatrices, hence monic of degree $n-1$ by Lemma \ref{lem:charmatrix_helpers}. 

The off-diagonal case follows directly from Lemma \ref{lem:adj_offdiag_sum_degrees_bound}.
\end{proof}

\begin{definition}
\label{def:coeff_bound}
\lean{coeff_bound}
\leanok
For a polynomial $p(x) = \sum_{i=0}^{d} p_i x^i \in \mathbb{Z}[x]$, we define the coefficients bound:
$$|p| = \sum_{i=0}^{d} |p_i|$$
\end{definition}

\begin{lemma}
\label{lem:polynomial_positive}
\lean{polynomial_positive}
\uses{def:coeff_bound}
\leanok
If $p \in \mathbb{Z}[x]$ has positive leading coefficient, then for all integers $n \ge |p|$, we have $n > 0$ and $p(n) > 0$.
\end{lemma}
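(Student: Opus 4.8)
The plan is to write $p(x)=\sum_{i=0}^{d} p_i x^i$ with $d=\deg p$ and leading coefficient $p_d>0$. Since $p_d$ is a \emph{positive integer}, in fact $p_d\ge 1$, and therefore
$$|p| \;=\; \sum_{i=0}^{d}|p_i| \;\ge\; |p_d| \;=\; p_d \;\ge\; 1.$$
Hence any integer $n\ge |p|$ satisfies $n\ge 1>0$, which disposes of the first claim. It also shows $n^i\le n^{d-1}$ for $0\le i\le d-1$, a fact I will use below.

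For $p(n)>0$ I would split on the degree. If $d=0$, then $p$ is the constant polynomial $p_0=p_d\ge 1$, so $p(n)=p_d>0$ trivially. If $d\ge 1$, isolate the leading term and bound the tail using the triangle inequality for finite sums together with $n^i\le n^{d-1}$:
$$\Bigl|\sum_{i=0}^{d-1}p_i n^i\Bigr| \;\le\; \sum_{i=0}^{d-1}|p_i|\,n^i \;\le\; n^{d-1}\sum_{i=0}^{d-1}|p_i| \;=\; n^{d-1}\bigl(|p|-p_d\bigr),$$
where the last equality uses $|p_d|=p_d$. Consequently
$$p(n) \;=\; p_d n^d + \sum_{i=0}^{d-1}p_i n^i \;\ge\; p_d n^d - n^{d-1}\bigl(|p|-p_d\bigr) \;=\; n^{d-1}\bigl(p_d n - |p| + p_d\bigr).$$
Since $p_d\ge 1$ and $n\ge |p|\ge 1>0$ we get $p_d n\ge n\ge |p|$, so the parenthesized factor is at least $p_d\ge 1>0$; as $n^{d-1}\ge 1$ as well, it follows that $p(n)>0$.

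The argument is essentially elementary, so the only real obstacle is bookkeeping: handling the degree-$0$ case separately so that the exponent $n-1$ in $n^{d-1}$ makes sense, and carefully relating $|p|$ to the sum of absolute values of the non-leading coefficients via $|p_d|=p_d$. In a formal development the minor friction will be juggling the polynomial's \texttt{natDegree}, its coefficient function, and the monotonicity/triangle-inequality lemmas for finite sums, none of which is conceptually difficult.
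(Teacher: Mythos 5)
Your proof is correct and follows essentially the same route as the paper: isolate the leading term, bound the tail by $n^{d-1}(|p|-p_d)$ using $n\ge 1$, and conclude $p(n)\ge n^{d-1}(p_d n - |p| + p_d)>0$, with the $d=0$ case handled separately. No substantive differences.
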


\begin{proof}
\leanok
Lemma is trivially true for $d=0$ so we can assume $d \geq 1$. Write $p(n) = an^d + r(n)$ where $a \ge 1$ is the leading coefficient of $p$, 
$d = \deg(p)$, and $\deg(r) < d$. 
For $n \ge |p|$:
$$n \ge |p| \ge a \ge 1 > 0.$$
Since $n \geq 1$, we have $|r(n)| \leq B n^{d-1}$ where $B = |p| - a$. Therefore,
$$p(n) = an^d + r(n) \geq an^d - Bn^{d-1} = n^{d-1}(an - B) \ge (a|p| - B) \ge |p| - B = a \ge 1 > 0.$$
\end{proof}

\begin{lemma}
\label{lem:adj_poly_strict_increasing}
\lean{adj_poly_strict_increasing}
\leanok
\uses{lem:adj_poly,lem:polynomial_positive,lem:charmatrix_helpers}
Let $M = (m_{ij}) = M(x) = \operatorname{adj}(x I - A)$ be the adjugate of the characteristic matrix $x I - A$.
Let $v = (1,2,\ldots,n)^T$ and $m = m(x) = \det(x I - A)$. Then for sufficiently large integer $x$:
$$0 < y_0 < y_1 < \cdots < y_{n-1} < m$$
\end{lemma}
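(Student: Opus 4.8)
The plan is to collapse the whole chain of strict inequalities into finitely many applications of Lemma~\ref{lem:polynomial_positive}. Recall (as in Lemma~\ref{lem:adj_eq}) that here $y = \operatorname{adj}(xI - A)\,v$ with $v = (1,2,\ldots,n)^T$, so each component is the polynomial
$$y_i(x) = \sum_{j=0}^{n-1} (j+1)\,p_{ij}(x),$$
where the $p_{ij}$ are the entries of $\operatorname{adj}(xI-A)$ described in Lemma~\ref{lem:adj_poly}. The first step is to pin down the degree and leading coefficient of each $y_i$: the diagonal term $(i+1)\,p_{ii}(x)$ has degree exactly $n-1$ with leading coefficient $i+1$ (since $p_{ii}$ is monic of degree $n-1$), while every off-diagonal term $(j+1)\,p_{ij}(x)$ has degree at most $n-2$. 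Hence $y_i$ is a polynomial of degree exactly $n-1$ with leading coefficient $i+1 \ge 1 > 0$.

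Next I would introduce the auxiliary polynomials $g_i(x) := y_{i+1}(x) - y_i(x)$ for $0 \le i \le n-2$, together with $y_0(x)$ and $h(x) := m(x) - y_{n-1}(x)$. From the previous step each $g_i$ has degree exactly $n-1$ with leading coefficient $(i+2)-(i+1) = 1 > 0$ (the top-degree terms do not cancel), and $y_0$ has leading coefficient $1 > 0$. By Lemma~\ref{lem:charmatrix_helpers}, $m(x)$ is monic of degree $n$, and since $y_{n-1}$ has degree $n-1 < n$, the difference $h(x)$ is monic of degree $n$, in particular of positive leading coefficient.

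Finally, let $N$ be the maximum of the coefficient bounds $|y_0|$, $|g_0|,\ldots,|g_{n-2}|$, $|h|$ (a finite set of nonnegative integers). For every integer $x \ge N$, applying Lemma~\ref{lem:polynomial_positive} to each of these polynomials yields simultaneously $y_0(x) > 0$, $g_i(x) > 0$ for all $i$ (that is, $y_i(x) < y_{i+1}(x)$), and $h(x) > 0$ (that is, $y_{n-1}(x) < m(x)$), which together are exactly the claimed chain $0 < y_0 < y_1 < \cdots < y_{n-1} < m$.

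The main obstacle is the degree bookkeeping in the first two steps: one must verify carefully that the unique degree-$(n-1)$ contribution to $y_i$ comes from the diagonal entry $p_{ii}$ and is neither cancelled nor overtaken by the off-diagonal entries of degree $\le n-2$, and likewise that forming $y_{i+1} - y_i$ (and $m - y_{n-1}$) preserves the leading coefficient rather than dropping the degree. Once these leading coefficients are correctly identified as positive, positivity for large $x$ is immediate from Lemma~\ref{lem:polynomial_positive}, and the uniformity of ``sufficiently large'' follows by taking the maximum over the finitely many coefficient bounds involved.
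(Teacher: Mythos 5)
Your proposal is correct and follows essentially the same route as the paper: identify the leading coefficient $i+1$ of $y_i$ from the monic diagonal adjugate entry, form the difference polynomials (consecutive $y_{i+1}-y_i$ and $m-y_{n-1}$), check their leading coefficients are positive, and invoke Lemma~\ref{lem:polynomial_positive} uniformly via a maximum over the finitely many coefficient bounds. The only cosmetic difference is that the paper considers general differences $p_j - p_i$ for $j>i$ and $m - p_i$ for all $i$, while you use only the consecutive ones needed for the chain, which suffices.
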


\begin{proof}
\leanok
The proof follows from Lemma \ref{lem:adj_poly} and Lemma \ref{lem:polynomial_positive}.
Let $y = Mv$. Then $y_i = \sum_{k=0}^{n-1} m_{ik} (k+1)$.

For each entry $y_i$, we express it as evaluation of a polynomial $p_i(x) = \sum_{k=0}^{n-1} m_{ik}(x) (k+1) \in \mathbb{Z}[x]$.
By Lemma \ref{lem:adj_poly}, the diagonal entry $m_{ii}$ is monic of degree $n-1$, while off-diagonal entries $m_{ik}$ (for $k \neq i$) have degree at most $n-2$.
Therefore, the coefficient of $x^{n-1}$ in $p_i$ is $i+1 > 0$ (dominated by the $m_{ii} (i+1)$ term).

For the difference $p_j - p_i$ with $j > i$, the leading term comes from $(m_{jj} (j+1) - m_{ii} (i+1))$. Since both $m_{jj}$ and $m_{ii}$ are monic of degree $n-1$, the leading coefficient of $p_j - p_i$ is $(j+1) - (i+1) = j - i > 0$.

Similarly, for $p_m(x) = \det(x I - A) - p_i(x)$, since $\det(x I - A)$ is monic of degree $n$ (by Lemma \ref{lem:charmatrix_helpers}) and $p_i$ has degree at most $n-1$, the leading coefficient is 1.

Since $p_0(x)$ has leading coefficient $0+1 = 1 > 0$, we have $y_0 > 0$ for sufficiently large $x$.

Applying Lemma \ref{lem:polynomial_positive} to these polynomials with positive leading coefficients gives the existence of $x_0$ such that all required inequalities hold for $x > x_0$.
\end{proof}

\begin{definition}[Linear Representation]
\label{def:linear_representation}
\lean{has_linear_representation}
\leanok
Let $f: \mathbb{Z}/n\mathbb{Z} \to \mathbb{Z}/n\mathbb{Z}$ be any function. A linear representation of $f$ is an injective function $j: \mathbb{Z}/n\mathbb{Z} \to \mathbb{Z}/m\mathbb{Z}$ 
such that for all $i\in \{0,1,\ldots,n-1\}$,
$$j(f(i)) = a \cdot j(i)$$
in $\mathbb{Z}/m\mathbb{Z}$, where $m$ is a positive integer and $a$ is a multiplier from $\mathbb{Z}/m\mathbb{Z}$.
\end{definition}

\begin{lemma}[Linear Representation Lemma]
\label{lem:linear_representation_lemma}
\lean{linear_representation_lemma}
\leanok
\uses{def:linear_representation,lem:adj_eq,lem:adj_poly_strict_increasing}
For any function $f: \mathbb{Z}/n\mathbb{Z} \to \mathbb{Z}/n\mathbb{Z}$ with $n > 1$, there exists an integer $a_f$ such that for any $a > a_f$, 
we can construct a linear representation of $f$ with multiplier $a$ and modulus $m > a$.
\end{lemma}

\begin{proof}
\leanok
Let $A = A_f$ be the adjacency matrix of $f$ and let $v = (1,2,\ldots,n)^T$. 
By Lemma \ref{lem:adj_poly_strict_increasing}, there exists $x_0$ such that for all integers $x > x_0$, 
the entries $y_i$ of $y = \operatorname{adj}(xI - A)v$ satisfy:
$$0 \leq y_0 < y_1 < \cdots < y_{n-1} < m(x)$$
where $m(x) = \det(xI - A)$ is the characteristic polynomial of $A$.

Since $n > 1$, the polynomial $m(x)$ is monic of degree $n \geq 2$. 
Therefore, $m - \text{id}$ (where $\text{id}(x) = x$) is also monic of degree $n$, with leading coefficient $1 > 0$.

By Lemma \ref{lem:polynomial_positive}, the polynomial $m - \text{id}$ is positive for all $x \geq |m - \text{id}|$.

Set $a_f = \max(x_0, |m - \text{id}|)$. For any $a > a_f$, we have:
\begin{itemize}
\item $a > x_0$, so the strict inequalities $0 \leq y_0 < y_1 < \cdots < y_{n-1} < m(a)$ hold
\item $a \geq |m - \text{id}|$, so $(m - \text{id})(a) = m(a) - a > 0$, which gives $m(a) > a$
\end{itemize}

Define:
\begin{itemize}
\item $m = m(a) = \det(aI - A)$ as the modulus (note: $m > a$ by construction)
\item $j: \mathbb{Z}/n\mathbb{Z} \to \mathbb{Z}/m\mathbb{Z}$ by $j(i) = y_i \bmod m$, where $y = \operatorname{adj}(aI - A)v$
\end{itemize}

Since $0 \leq y_i < m$ for all $i$ and the $y_i$ are strictly increasing, $j$ is injective.

By Lemma \ref{lem:adj_eq}, we have $y_{f(i)} = a \cdot y_i - m \cdot v_i$ for all $i$. 
Taking this equation modulo $m$ gives:
$$j(f(i)) \equiv a \cdot j(i) \pmod{m}$$

Therefore, $j$ is a linear representation of $f$ with modulus $m > a$ and multiplier $a \in \mathbb{Z}/m\mathbb{Z}$.
\end{proof}

\begin{theorem}[Main Theorem]
\label{thm:linear_representation}
\lean{linear_representation}
\leanok
\uses{def:linear_representation,lem:linear_representation_lemma}
Any finite function $f: \mathbb{Z}/n\mathbb{Z} \to \mathbb{Z}/n\mathbb{Z}$ has a linear representation.
\end{theorem}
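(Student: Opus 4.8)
The plan is to instantiate the two main lemmas with the vector $v = (1, 2, \ldots, n)^T$ and a sufficiently large integer $x$, reduce everything modulo $m = \det(xI - A)$, and read off the embedding and multiplier directly. Concretely, first I would fix $v = (1, 2, \ldots, n)^T \in \mathbb{Z}^n$ and apply Lemma~\ref{lem:adj_poly_strict_increasing} to obtain an integer $x$ such that $y = \operatorname{adj}(xI - A)\,v \in \mathbb{Z}^n$ and $m = \det(xI - A) \in \mathbb{Z}$ satisfy $0 < y_0 < y_1 < \cdots < y_{n-1} < m$. In particular $m$ is a positive integer, so $\mathbb{Z}/m\mathbb{Z}$ is a well-defined ring, and the strict inequalities show that $y_0, \ldots, y_{n-1}$ are pairwise distinct integers all lying in $[0, m)$.

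Next I would define $j : \mathbb{Z}/n\mathbb{Z} \to \mathbb{Z}/m\mathbb{Z}$ by $j(i) = \overline{y_i}$, the residue class of $y_i$ modulo $m$, where $i$ runs over the canonical representatives $\{0, 1, \ldots, n-1\}$ of $\mathbb{Z}/n\mathbb{Z}$. This $j$ is injective: if $\overline{y_i} = \overline{y_{i'}}$ then $y_i \equiv y_{i'} \pmod{m}$, and since both integers lie in $[0, m)$ they are equal, whence $i = i'$ because $k \mapsto y_k$ is strictly increasing on $\{0, \ldots, n-1\}$.

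Finally, set $a = \overline{x} \in \mathbb{Z}/m\mathbb{Z}$. For each $i \in \{0, \ldots, n-1\}$, Lemma~\ref{lem:adj_eq} gives the integer identity $y_{f(i)} = x\,y_i - m\,v_i$, with $f(i)$ read as its representative in $\{0, \ldots, n-1\}$ so that $y_{f(i)}$ is one of the coordinates of $y$. Applying the quotient homomorphism $\mathbb{Z} \to \mathbb{Z}/m\mathbb{Z}$ and using $\overline{m} = 0$ gives $\overline{y_{f(i)}} = \overline{x}\cdot\overline{y_i}$, that is $j(f(i)) = a\cdot j(i)$; so $j$ together with $m$ and $a$ is a linear representation of $f$.

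I do not expect a genuine obstacle: all the analytic work — forcing one integer $x$ to simultaneously separate the coordinates $y_i$ from each other, from $0$, and from $m$ — has been absorbed into Lemma~\ref{lem:adj_poly_strict_increasing}, and what remains is bookkeeping about reduction modulo $m$. The one point requiring care is the identification of $\mathbb{Z}/n\mathbb{Z}$ with $\{0, \ldots, n-1\}$, which is what makes indexing $y$ by $f(i)$ legitimate and compatible with the definition of $j$; once that is fixed, the commuting square is simply the mod-$m$ image of the identity in Lemma~\ref{lem:adj_eq}.
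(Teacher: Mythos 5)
Your proposal is correct and follows essentially the same route as the paper: instantiate Lemma~\ref{lem:adj_poly_strict_increasing} with $v=(1,2,\ldots,n)^T$ to get the strict chain $0<y_0<\cdots<y_{n-1}<m$, define $j(i)=\overline{y_i}$ and $a=\overline{x}$, and reduce the identity of Lemma~\ref{lem:adj_eq} modulo $m$. The only cosmetic difference is that the paper additionally arranges $x<m(x)$ to view $a$ as a canonical representative, whereas you simply take the residue class, which is if anything slightly cleaner.
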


\begin{proof}
\leanok
For $n > 1$, apply Lemma \ref{lem:linear_representation_lemma} to obtain a threshold $a_f$ and choose $a = a_f + 1 > a_f$. 
The lemma provides an explicit construction of a linear representation for $f$ with multiplier $a_f + 1$.

For $n = 1$, the result is trivial: there is only one element in $\mathbb{Z}/1\mathbb{Z}$ (namely 0), 
so any function satisfies $f(0) = 0$. We can use $m = 1$, the identity map $j = \operatorname{id}$, and multiplier $0$, 
giving $j(f(0)) = 0 = 0 \cdot j(0)$ in $\mathbb{Z}/1\mathbb{Z}$.
\end{proof}

\subsection*{Examples}

\begin{example}[Quadratic Function in $\mathbb{Z}/3\mathbb{Z}$]
\label{ex:quadratic_Z3}
Consider the function $f: \mathbb{Z}/3\mathbb{Z} \to \mathbb{Z}/3\mathbb{Z}$ defined by $f(x) = x^2$.
This function maps:
\begin{align*}
0 &\mapsto 0 \\
1 &\mapsto 1 \\
2 &\mapsto 4 \equiv 1 \pmod{3}
\end{align*}
Despite being a non-linear function, Theorem \ref{thm:linear_representation} guarantees that $f$ has a linear representation.

The adjacency matrix is:
$$A_f = \begin{pmatrix}
1 & 0 & 0 \\
0 & 1 & 0 \\
0 & 1 & 0
\end{pmatrix}$$

The characteristic matrix is:
$$x I - A_f = \begin{pmatrix}
x-1 & 0 & 0 \\
0 & x-1 & 0 \\
0 & -1 & x
\end{pmatrix}$$

The characteristic polynomial is:
$$m = \det(x I - A_f) = (x-1)^2 \cdot x = x^3 - 2x^2 + x$$

The adjugate matrix is:
$$\operatorname{adj}(x I - A_f) = \begin{pmatrix}
x(x-1) & 0 & 0 \\
0 & x(x-1) & 0 \\
0 & (x-1) & (x-1)^2
\end{pmatrix}$$

Using vector $v = (1, 2, 3)^T$, we get:
$$y = \operatorname{adj}(x I - A_f) \cdot v = \begin{pmatrix}
x(x-1) \cdot 1 \\
x(x-1) \cdot 2 \\
(x-1) \cdot 2 + (x-1)^2 \cdot 3
\end{pmatrix} = \begin{pmatrix}
x^2 - x \\
2x^2 - 2x \\
(x-1)(3x-1)
\end{pmatrix} = \begin{pmatrix}
x^2 - x \\
2x^2 - 2x \\
3x^2 - 4x + 1
\end{pmatrix}$$

For $x = 4$, we compute:
\begin{align*}
y_0 &= 4^2 - 4 = 16 - 4 = 12 \\
y_1 &= 2(4^2) - 2(4) = 32 - 8 = 24 \\
y_2 &= 3(4^2) - 4(4) + 1 = 48 - 16 + 1 = 33 \\
m &= 4^3 - 2(4^2) + 4 = 64 - 32 + 4 = 36
\end{align*}

The injection $j: \mathbb{Z}/3\mathbb{Z} \to \mathbb{Z}/36\mathbb{Z}$ is defined by $j(i) = y_i$:
$$j(0) = 12, \quad j(1) = 24, \quad j(2) = 33$$

These values are strictly increasing and bounded by $m = 36$, so $j$ is injective.

We verify the linear representation property using Lemma \ref{lem:adj_eq}. 

The lemma states that $y_{f(i)} = x y_i - m \cdot v_i$, which we can rewrite as:
$$j(f(i)) \equiv x j(i) \pmod{m}.$$

Verification:
\begin{align*}
j(f(0)) = j(0) = 12 &\equiv 4 \cdot 12 - 36 \cdot 1 = 48 - 36 = 12 = 4 \cdot j(0) \pmod{36} \quad\checkmark \\
j(f(1)) = j(1) = 24 &\equiv 4 \cdot 24 - 36 \cdot 2 = 96 - 72 = 24 = 4 \cdot j(1) \pmod{36} \quad\checkmark \\
j(f(2)) = j(1) = 24 &\equiv 4 \cdot 33 - 36 \cdot 3 = 132 - 108 = 24 = 4 \cdot j(2) \pmod{36} \quad\checkmark
\end{align*}

Thus $j(f(i)) \equiv 4 \cdot j(i) \pmod{36}$ for all $i \in \mathbb{Z}/3\mathbb{Z}$, confirming the quadratic function $f(x) = x^2$ has a linear representation with modulus $m = 36$ and multiplier $a = 4$.
\end{example}

\section{Discussion and Context}

The representation of arbitrary functions in structured algebraic forms is a recurring theme. Our result provides one such representation, which can be compared to others.
\begin{itemize}
    \item \textbf{Polynomial Interpolation:} Any function over a finite field or ring can be represented by a polynomial (e.g., via Lagrange interpolation). Our result is distinct in that it achieves a representation as a simple linear map, $z \mapsto az$, at the cost of requiring an embedding $j$ into the ring rather than working on the original set's labels directly.
    \item \textbf{Graph Theory:} In Graph Theory our results shows that any functional digraph is a subgraph of a functional graph of a linear map.
\end{itemize}
While the modulus $m$ can be large, making this construction primarily of theoretical interest, it provides the powerful knowledge that every finite function possesses a hidden linear structure.


\end{document}